\newcommand\+{\;\lower\plusheight\hbox{$+$}\;}
\newcommand\lldots{\;\lower\plusheight\hbox{$\cdots$}\;}
\newcommand{\SL}{\operatorname{SL}}
\newcommand{\Z}{\mathbb{Z}}
\newtheorem{Theorem}{Theorem}[section]
\newtheorem{Lemma}[Theorem]{Lemma}
\newtheorem{Corollary}[Theorem]{Corollary}
\newtheorem{Example}[Theorem]{Example}
\newdimen\plusheight
\newdimen\minusheight
\newdimen\cdotsheight
\numberwithin{equation}{section}
\begin{document}

\title{Analogues of the Ramanujan--Mordell Theorem}
\author{Shaun Cooper, Ben Kane and Dongxi Ye}
\address{Institute of Natural and Mathematical Sciences \\
Massey University-Albany \\
Private Bag 102904, North Shore Mail Centre\\
Auckland, New Zealand \\
E-mail: s.cooper@massey.ac.nz}
\address{Department of Mathematics\\
University of Hong Kong, Pokfulam, Hong Kong\\
E-mail address: bkane@maths.hku.hk}
\address{
Department of Mathematics, University of Wisconsin\\
480 Lincoln Drive, Madison, Wisconsin, 53706 USA\\
E-mail: lawrencefrommath@gmail.com}
\thanks{The research of the second author was supported by grant project numbers 27300314 and 17302515 of the Research Grants Council.}
\date{\today}
\begin{abstract}
The Ramanujan--Mordell Theorem for sums of an even number of squares is extended to other quadratic forms and
quadratic 
polynomials.
\end{abstract}
\keywords{Ramanujan--Mordell Theorem, theta functions, sums of squares, sums of triangular numbers, eta functions, modular forms}
\subjclass[2010]{11F11,11F20,11F30,11E25}
\maketitle

\section{Introduction}

One of the classical problems in number theory is to determine exact formulas for the number of representations of a positive integer $n$ as a sum of $2k$ squares, which we denote by $r(2k;n)$. If we set
\begin{equation}
\label{defz}
z=z(\tau):=\sum_{m=-\infty}^{\infty}\sum_{n=-\infty}^{\infty}q^{m^{2}+n^{2}}
\end{equation}
where, here and throughout the remainder of this work, $\tau$ is a complex number with positive imaginary part and $q=e^{2\pi i\tau}$, then 
(considering $z$ as a power series in $q$)
$$
\sum_{n=0}^{\infty}r(2k;n)q^{n}=z^{k}.
$$
The function $z^k$ is a modular form and it is well-known that
$$
{{
z^{k}(\tau)
}}
=E^{*}_{k}(\tau)+C_{k}(\tau),
$$
where $E^{*}_{k}(\tau)$ is an Eisenstein series and $C_{k}(\tau)$ is a cusp form.  In his remarkable work, Ramanujan \cite[Eqs. (145)--(147)]{ram} stated without proof 
explicit formulas for $E^{*}_{k}(\tau)$ and $C_{k}(\tau)$,
and hence deduced the value of the coefficients $r(2k;n)$.
Ramanujan's result was first proved by Mordell \cite{mordell}.
To state it we need Dedekind's eta function, which is defined by 
$$
{{
\eta(\tau):=
}}
q^{1/24}\prod_{j=1}^{\infty}(1-q^{n}).
$$
Here and throughout, we write $\eta_{m}$ for $\eta(m\tau)$ for any positive integer $m$. 
\begin{Theorem}[Ramanujan--Mordell]
\label{rammor}
Suppose $k$ is a positive integer. Let $z$ be defined by \eqref{defz}. Then
\begin{equation}
\label{zF1}
z^{k}=F_{k}(\tau)+z^{k}\sum_{1\leq j\leq\frac{(k-1)}{4}}c_{j,k}x^{j}
\end{equation}
where $c_{j,k}$ are numerical constants that depend on $j$ and $k$, 
$$
x=x(\tau):=\frac{\eta^{48}_{2}
}{\eta_{1}^{24}
\eta_{4}^{24}
},
$$
and $F_{k}(\tau)$ is an Eisenstein series defined by:
$$
F_{1}(\tau):=1+4\sum_{j=1}^{\infty}\frac{q^{j}}{1+q^{2j}},
$$
and for $k\geq1$,
$$
F_{2k}(\tau):=1-\frac{4k(-1)^{k}}{(2^{2k}-1)\mathcal{B}_{2k}}\sum_{j=1}^{\infty}\frac{j^{2k-1}q^{j}}{1-(-1)^{k+j}q^{j}},
$$
and
$$
F_{2k+1}(\tau):=1+\frac{4(-1)^{k}}{\mathcal{E}_{2k}}\sum_{j=1}^{\infty}\left(\frac{(2j)^{2k}q^{j}}{1+q^{2j}}-\frac{(-1)^{k+j}(2j-1)^{2k}q^{2j-1}}{1-q^{2j-1}}\right).
$$
Here $\mathcal{B}_{k}$ and $\mathcal{E}_{k}$ are the Bernoulli numbers and Euler numbers, respectively, defined by
$$
\frac{u}{e^{u}-1}=\sum_{k=0}^{\infty}\frac{\mathcal{B}_{k}}{k!}u^{k}\quad\mbox{and}\quad \frac{1}{\cosh{u}}=\sum_{k=0}^{\infty}\frac{\mathcal{E}_{k}}{k!}u^{k}.
$$
\end{Theorem}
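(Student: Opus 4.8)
The plan is to read \eqref{zF1} as an identity of modular forms of weight $k$ on $\Gamma_{0}(4)$ and to pin down the $c_{j,k}$ by confining all the forms in play to a single eigenspace of the Fricke involution $W_{4}$. First I record the standard facts: $z=\theta_{3}(\tau)^{2}$ with $\theta_{3}(\tau)=\sum_{n}q^{n^{2}}$, so $z^{k}$ is a holomorphic modular form of weight $k$ on $\Gamma_{0}(4)$, of trivial character for $k$ even and of the nontrivial character $\chi$ modulo $4$ for $k$ odd; and $x$ is a weight-$0$ modular function on $\Gamma_{0}(4)$ that is holomorphic and non-vanishing on the upper half-plane, vanishes at the cusp $\infty$ (so that $z^{k}x^{j}=O(q^{j})$), and has a pole only at the cusp $\tfrac12$, which is precisely the cusp where $\theta_{3}$ vanishes. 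Because $\theta_{3}$ has no zeros in the upper half-plane, $z^{k}x^{j}$ is holomorphic there; a computation of orders at the three cusps $\infty,0,\tfrac12$ shows that $z^{k}x^{j}\in S_{k}(\Gamma_{0}(4),\chi^{k})$ exactly for $1\le j\le N:=\lfloor(k-1)/4\rfloor$ (at $\tfrac12$ the zero of $z^{k}$, of order $k/2$, must beat the pole of $x^{j}$), and since $z^{k}x^{j}=(\mathrm{const})q^{j}+O(q^{j+1})$ these $N$ cusp forms are linearly independent. Finally, $\theta_{3}(-1/(4\tau))=\sqrt{-2i\tau}\,\theta_{3}(\tau)$ together with the interchange $\theta_{2}\leftrightarrow\theta_{4}$ under $W_{4}$ shows that $x$ is $W_{4}$-invariant and that $z^{k}$, hence each $z^{k}x^{j}$, is a $W_{4}$-eigenform; write $\varepsilon_{k}$ for its eigenvalue ($\varepsilon_{k}=(-1)^{k/2}$ for $k$ even; for $k$ odd one uses the character-twisted $W_{4}$).

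Next I identify $F_{k}$. Since $W_{4}$ respects the decomposition $z^{k}=E^{*}_{k}+C_{k}$ into Eisenstein and cusp parts, both $E^{*}_{k}$ and $C_{k}$ lie in the $\varepsilon_{k}$-eigenspace, and $E^{*}_{k}$ is the (essentially unique) Eisenstein series in $\mathrm{Eis}_{k}(\Gamma_{0}(4),\chi^{k})^{W_{4}=\varepsilon_{k}}$ having the constant terms of $z^{k}$ at the cusps. One then checks that the Lambert series written for $F_{k}$ in the statement is exactly this combination of the Eisenstein series attached to the cusps of $\Gamma_{0}(4)$ — for $k$ even a combination of $E_{k}(\tau)$ and $E_{k}(2\tau)$ encoded by the sign $(-1)^{k+j}$, and for $k$ odd the analogous combination carrying $\chi$ — by matching the finitely many coefficients that determine an Eisenstein series (the case $k=1$ reduces to the classical two-square formula $z=F_{1}$). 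Granting $F_{k}=E^{*}_{k}$, the difference $z^{k}-F_{k}=C_{k}$ is a cusp form lying in $S_{k}(\Gamma_{0}(4),\chi^{k})^{W_{4}=\varepsilon_{k}}$.

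The crux — and the step I expect to be the main obstacle — is the dimension identity $\dim S_{k}(\Gamma_{0}(4),\chi^{k})^{W_{4}=\varepsilon_{k}}=N$; note that the full cusp space is in general strictly larger than $N$-dimensional, so the restriction to the eigenspace is essential. Once the identity is known, $z^{k}x,\dots,z^{k}x^{N}$ form a basis of this eigenspace, whence $z^{k}-F_{k}=\sum_{j=1}^{N}c_{j,k}z^{k}x^{j}$ for unique constants $c_{j,k}$, which are then computed by comparing the coefficients of $q,q^{2},\dots,q^{N}$ in a triangular system; this is \eqref{zF1}. To prove the dimension identity I would use the oldform/newform structure of $S_{k}(\Gamma_{0}(4))$ (for $k$ even the oldspace comes from levels $1$ and $2$, and the prime-power newspace is under control) together with the action of $W_{4}$ on each constituent — $W_{4}$ sends $f(\tau)\mapsto\pm f(2\tau)$ on an oldspace attached to a $W_{2}$-eigenform $f$ (the sign being its $W_{2}$-eigenvalue), and acts on the level-one oldspace by the evident swap $d\tau\leftrightarrow(4/d)\tau$ — so that the $\varepsilon_{k}$-eigenspace can be counted; alternatively one can work entirely inside the graded ring $\mathbb{C}[\theta_{2}^{4},\theta_{4}^{4}]$ and exhibit an explicit basis. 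For $k$ odd the same analysis is carried out for forms with character $\chi$, the only genuinely new point being the correct normalization of the character-twisted $W_{4}$ and the resulting value of $\varepsilon_{k}$.
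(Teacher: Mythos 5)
Two remarks before the comparison. First, the properties you use for $x$ (holomorphic and nonvanishing on the upper half-plane, vanishing at $i\infty$, pole only at the cusp $\tfrac12$) are those of $\eta_{1}^{24}\eta_{4}^{24}/\eta_{2}^{48}$, i.e.\ the \emph{reciprocal} of the $x$ printed in the statement; since each $z^{k}x^{j}$ must be a cusp form (for $k=5$, $j=1$ it should be $\eta_{1}^{4}\eta_{2}^{2}\eta_{4}^{4}$), your reading is the correct one and is the one consistent with the paper's own $x_{p}$ in \eqref{x3}, so you have tacitly repaired a misprint rather than made an error. Second, the paper does not prove Theorem \ref{rammor} at all (it is quoted, with the proof attributed to Mordell), so the only comparison available is with the method the paper uses for its analogue, Theorem \ref{main}. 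That method is a valence-type argument: form $F_{k}/(z_{p}^{k}x_{p}^{\ell})$ and $1/x_{p}$, check invariance under $\Gamma_{0}(4p)$ extended by its Atkin--Lehner involutions, and use the fact that the extended group has only the two cusps $i\infty$ and $\tfrac12$ to conclude that, after subtracting a polynomial in $1/x_{p}$, the function is holomorphic on the compact quotient and hence constant; no Eisenstein/cusp decomposition and no dimension formulas are needed, all the analytic input being the cusp behaviour recorded in Lemmas \ref{lemma1}, \ref{F1/2} and \ref{Fhalf}. Your route --- decompose $z^{k}$ into Eisenstein plus cusp part and trap the cusp part in a Fricke eigenspace by a dimension count --- is genuinely different and, if completed, buys more: it shows the $z^{k}x^{j}$ actually form a basis of $S_{k}(\Gamma_{0}(4),\chi^{k})^{W_{4}=\varepsilon_{k}}$, which the constant-function argument does not give.

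The two steps you defer are where the real work lies, and both can be closed. (a) The identification $F_{k}=E_{k}^{*}$ is not just ``finitely many coefficients'': uniformly in $k$ you must compute the constant terms of the explicit Lambert series $F_{k}$ at the cusps $0$ and $\tfrac12$ and match them against those of $z^{k}$; this is exactly the kind of computation the paper packages into Lemmas \ref{F1/2} and \ref{Fhalf} for its $F_{k}(\tau;p)$, and it is unavoidable here too. (b) Your dimension identity is true, and for even $k$ it follows in one line from the signature of the Fricke group $\Gamma_{0}(4)^{+}$: that group has genus $0$, two cusps and a single elliptic point of order $2$, so $\dim S_{k}(\Gamma_{0}(4))^{W_{4}=+1}=\lfloor k/4\rfloor-1$, and combined with $\dim S_{k}(\Gamma_{0}(4))=k/2-2$ this gives exactly $\lfloor (k-1)/4\rfloor$ for the $\varepsilon_{k}=(-1)^{k/2}$ eigenspace; this is cleaner than the oldform/newform bookkeeping you propose, which would force you to control Fricke signs of level-$4$ newforms in every weight. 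For odd $k$ be aware that the naive Fricke operator squares to $\chi(-1)=-1$ on this space, so eigenvalues are not real until the twisted operator is correctly normalized; alternatively, drop eigenvalue language altogether and work with the space of cusp forms having the explicit automorphy factor $(-2i\tau)^{k}$ under $\tau\mapsto-1/(4\tau)$ that $z^{k}$ and every $z^{k}x^{j}$ share, computing its dimension on the same quotient orbifold --- which is, in effect, the geometry underlying the paper's two-cusp argument.
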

The reader is referred to \cite[p.~2]{chancooper} for a brief account of 
the 
history of the study of Theorem~\ref{rammor} for various $k$.

The goal of this work is to prove the analogues of the Ramanujan--Mordell Theorem for which the quadratic form $m^{2}+n^{2}$ in \eqref{defz} is replaced with the quadratic form
$m^2+pn^2$, or by the quadratic 
polynomial
$$
\frac{m(m+1)}{2}+p\frac{n(n+1)}{2}, 
$$
where $p\in \left\{3,7,11,23\right\}$.

This work is organized as follows. In Section \ref{defmain}, we set up definitions, state the main results
and explicate several examples. Proofs of the main results
are given in Section \ref{proofs}.

\section{Definitions and Main Results}
\label{defmain}
For $k\geq1$, define the normalized Eisenstein series by
\begin{equation}
\label{E2k}
E_{2k}(\tau):=1-\frac{4k}{\mathcal{B}_{2k}}\sum_{j=1}^{\infty}\frac{j^{2k-1}q^{j}}{1-q^{j}}.
\end{equation}
Let $p$ be an odd prime. The generalized Bernoulli numbers $\mathcal{B}_{k,p}$ are defined by
\begin{equation}
\label{Bkp}
\frac{x}{e^{px}-1}\sum_{j=1}^{p-1}
{{
\chi_p(j)
}}
e^{jx}=\sum_{k=0}^{\infty}\mathcal{B}_{k,p}\frac{x^{k}}{k!},
\end{equation}
where $
{{
\chi_p(j):=
}}
\left(\frac{j}{p}\right)$ is the Legendre symbol. Let $k$ be a positive integer which satisfies
$$
k\equiv\frac{p-1}{2}\pmod{2}.
$$
The generalized Eisenstein series $E_{k}^{0}(\tau;\chi_{p})$ and $E_{k}^{\infty}(\tau;\chi_{p})$ 
at the cusps $0$ and $i\infty$, respectively, 
are defined by 
\begin{align*}
E_{k}^{0}(\tau;\chi_{p})&:=\delta_{k,1}-\frac{2k}{\mathcal{B}_{k,p}}\sum_{j=1}^{\infty}\frac{j^{k-1}}{1-q^{pj}}\sum_{\ell=1}^{p-1}\left(\frac{\ell}{p}\right)q^{j\ell}
\intertext{and}
E_{k}^{\infty}(\tau;\chi_{p})&:=1-\frac{2k}{\mathcal{B}_{k,p}}\sum_{j=1}^{\infty}\left(\frac{j}{p}\right)\frac{j^{k-1}q^{j}}{1-q^{j}},
\end{align*}
where $\delta_{m,n}$ is the Kronecker delta function, defined by
$$
\delta_{m,n}:=\begin{cases}1\quad\mbox{if $m=n$,}\\0\quad\mbox{if $m\ne n$.}\end{cases}
$$
Certain linear combinations of Eisenstein series and generalized Eisenstein series will occur
in the main results, and in anticipation of this we define series $G_{k}(\tau;p)$, $\tilde{G}_{2k+1}(\tau;p)$,
$F_{k}(\tau;p)$ and $\tilde{F}(\tau;p)$
as follows. For $k\geq 1$, let
$$
G_{2k}(\tau;p) := E_{2k}(\tau) + (-p)^k E_{2k}(p\tau).
$$
For $k\geq 0$ let
\begin{align*}
G_{2k+1}(\tau;p) &:= E_{2k+1}^\infty(\tau;\chi_p)+(-p)^kE_{2k+1}^0(\tau;\chi_p)
\intertext{and}
\tilde{G}_{2k+1}(\tau;p) &:= E_{2k+1}^\infty(\tau;\chi_p)-(-p)^kE_{2k+1}^0(\tau;\chi_p).
\end{align*}
For $p=3$ or $11$ and $k\geq0$, let
\begin{equation}
\label{Fo}
F_{2k+1}(\tau;p):= \frac{G_{2k+1}(\tau;p)+2^{2k+1}G_{2k+1}(4\tau;p)}{(2^{2k}+1)(1+\delta_{k,0})}
\end{equation}
and
\begin{equation}
\label{Fto}
\tilde{F}_{2k+1}(\tau;p):=\frac{(2^{2k+1}-2)\tilde{G}_{2k+1}(\tau;p)-2^{2k+1}G_{2k+1}(2\tau;p)+2G_{2k+1}(\tau/2;p)}
{2^{4k+2}(2^{2k+1}+1)(1+\delta_{k,0})}.
\end{equation}
For $p=7$ or $23$ and $k\geq0$, let
\begin{equation}
\label{Fo7}
F_{2k+1}(\tau;p):= \frac{G_{2k+1}(\tau;p)-2G_{2k+1}(2\tau;p)+2^{2k+1}G_{2k+1}(4\tau;p)}
{(2^{2k+1}-1)(1+\delta_{k,0})}
\end{equation}
and
\begin{equation}
\tilde{F}_{2k+1}(\tau;p) := \frac{G_{2k+1}(\tau;p)-G_{2k+1}(2\tau;p)}{2^{2k+1}(2^{2k+1}-1)(1+\delta_{k,0})}.
\end{equation}
For $p=3$, $7$, $11$ or $23$ and any integer $k\geq1$, let
\begin{equation}
\label{Fe}
F_{2k}(\tau;p) := \frac{G_{2k}(\tau;p)-2G_{2k}(2\tau;p)+2^{2k}G_{2k}(4\tau;p)}{(2^{2k}-1)(1+(-p)^k)}
\end{equation}
and
\begin{equation}
\label{Fte}
\tilde{F}_{2k}(\tau;p) := \frac{G_{2k}(\tau;p)-G_{2k}(2\tau;p)}{2^{2k}(2^{2k}-1)(1+(-p)^k)}.
\end{equation}
The observant reader will notice that the series $\tilde{G}_{2k+1}$ occurs only once in the definitions \eqref{Fo}--\eqref{Fte}, and that is as one of the terms in~\eqref{Fto}. { A reason for this will be seen later in Lemma \ref{Fhalf}, in which \eqref{E2k/2} and \eqref{E2k3} imply that for $p=3$ or $11$
$$
G_{2k+1}\left(\tau+\frac{1}{2};p\right)=-G_{2k+1}(\tau;p)+\left(2-2^{2k+1}\right)\tilde{G}_{2k+1}(2\tau;p)+2^{2k+1}G_{2k+1}(4\tau;p).
$$}

For $p=3,$ $7$, $11$ or $23$, let $z_{p}$ and $x_{p}$ be defined by
\begin{equation}
\label{zp}
z_{p}=z_{p}(\tau):=\sum_{m=-\infty}^{\infty}\sum_{n=-\infty}^{\infty}q^{m^{2}+pn^{2}},\\
\end{equation}
and 
\begin{equation}
\label{x3}
x_{p}=x_{p}(\tau):=\frac{\left(\eta_{1}\eta_{4}\eta_{p}\eta_{4p}\right)^{24/(p+1)}}{\left(\eta_{2}\eta_{2p}\right)^{48/(p+1)}}.
\end{equation}
The analogue of the Ramanujan--Mordell Theorem, and the main result of this work, is:
\begin{Theorem}
\label{main}
Suppose $p=3$, $7$, $11$ or $23$ and let $k$ be a positive integer. Then
$$
z_{p}^{k}=F_{k}(\tau;p)+z_{p}^{k}\sum_{1\leq j<\frac{(p+1)k}{8}}c_{p,k,j}x_{p}^{j},
$$
where $c_{p,k,j}$ are numerical constants that depend only on $p$, $k,$ and $j$.
\end{Theorem}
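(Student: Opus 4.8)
The plan is to interpret the identity as an equation in a finite-dimensional space of modular forms. Both $z_p^k$ and the Eisenstein series $F_k(\tau;p)$ are modular forms of weight $k$ on $\Gamma_0(4p)$ (with character $\chi_p^k$ when $k$ is odd), and $x_p$ is a Hauptmodul-type modular function on the same group. The first step is to pin down the relevant spaces: one checks via the standard transformation laws for $\eta(m\tau)$ and for theta/Eisenstein series that $z_p^k \in M_k(\Gamma_0(4p),\chi)$, that $F_k(\tau;p)$ lies in the Eisenstein subspace, and that $x_p$ is holomorphic and nonvanishing on the upper half plane with known orders at the cusps of $\Gamma_0(4p)$. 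The key structural claim is then that $z_p^{-k}\bigl(z_p^k-F_k(\tau;p)\bigr)$ is a cusp form divided by $z_p^k$ and, because of where $z_p$ vanishes, equals a polynomial in $x_p$ with no constant term and with degree bounded by the stated range $j<(p+1)k/8$.

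The main steps I would carry out are as follows. First, establish that $F_k(\tau;p)$ is the Eisenstein part of $z_p^k$ by computing constant terms at all cusps of $\Gamma_0(4p)$: the definitions \eqref{Fo}--\eqref{Fte} are engineered, via the oldform maps $\tau\mapsto 2\tau$, $\tau\mapsto 4\tau$, $\tau\mapsto \tau/2$ applied to $G_k$ and $\tilde G_k$, precisely so that $F_k(\tau;p)$ and $z_p^k$ have matching behavior at every cusp; this is where Lemma~\ref{Fhalf} and the transformation formulas quoted in the excerpt (the relations for $G_{2k+1}(\tau+\tfrac12;p)$ and the $E_{2k}$ identities) get used. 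Second, show $C_k:=z_p^k-F_k(\tau;p)$ is a cusp form. Third, analyze the divisor of $x_p$ and of $z_p$ on $X_0(4p)$: since $x_p$ is a product of eta quotients it is nonvanishing on $\mathbb H$ and its divisor is supported on cusps, while $z_p$ (a sum of two squares / genus-theory theta series) vanishes only at certain cusps; dividing $C_k$ by $z_p^k$ and multiplying by a suitable power of $x_p$ should produce a holomorphic object, and a valence-count then forces $C_k/z_p^k$ to be a polynomial in $x_p$. Fourth, count dimensions: the number of available monomials $x_p^j$ with $1\le j<(p+1)k/8$ must match $\dim S_k(\Gamma_0(4p),\chi)$ (or at least the dimension of the subspace spanned by $z_p^k x_p^j$), which is the arithmetic input that makes the constants $c_{p,k,j}$ exist and be uniquely determined; here $p+1\in\{4,8,12,24\}$ is what makes $24/(p+1)$ an integer and controls the cusp orders of $x_p$.

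Concretely I expect to induct on $k$ (or argue uniformly) using the multiplier: one knows $z_p$ vanishes to a definite order at the cusps where it is zero, so $z_p^k x_p^j$ for $j$ in the stated range spans a space of cusp forms, and conversely every cusp form of weight $k$ on $\Gamma_0(4p)$ with the right character is such a combination because the map $f\mapsto f/z_p^k$ sends $S_k$ injectively into the space of modular functions with poles only at the zero-cusps of $z_p$ of bounded order, which is spanned by $1,x_p,x_p^2,\dots$. Equating $q$-expansions (or, cleanly, invoking that two modular forms of the same weight and level agreeing to the Sturm bound are equal) then finishes the proof, with the $c_{p,k,j}$ read off from finitely many Fourier coefficients.

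The hard part will be the bookkeeping at the cusps: $\Gamma_0(4p)$ has several cusps (at $0,\tfrac12,\tfrac14,\tfrac1p,\tfrac1{2p},\ldots$), and one must verify both that the particular linear combinations in \eqref{Fo}--\eqref{Fte} reproduce the constant term of $z_p^k$ at each of them \emph{and} that $x_p$ has exactly the vanishing/pole pattern needed for the valence bound $(p+1)k/8$ to be sharp. The splitting into the two cases $p\in\{3,11\}$ versus $p\in\{7,23\}$ — which is already visible in the differing definitions of $F_{2k+1}$ and $\tilde F_{2k+1}$ — reflects the two possible cusp structures (governed by whether $2$ splits, is inert, or ramifies in $\mathbb Q(\sqrt{-p})$ and by the genus of $X_0(4p)$), and each case will need its own cusp computation; once those are in hand, the rest is linear algebra in a finite-dimensional space of modular forms.
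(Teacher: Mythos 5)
Your overall strategy (reduce the identity to a statement in a finite-dimensional space attached to level $4p$, with $x_p$ playing the role of a uniformizer) is in the right spirit, but the central step does not close as you have set it up, and the missing ingredient is exactly the device the paper relies on. You work throughout on $\Gamma_0(4p)$ and assert that the space of modular functions holomorphic on $\mathbb H$ with poles confined to the zero-cusps of $z_p$ of bounded order ``is spanned by $1,x_p,x_p^2,\dots$,'' so that a valence/dimension count forces $(z_p^k-F_k)/z_p^k$ to be a polynomial in $x_p$. That claim is false at the level of $\Gamma_0(4p)$: the curve $X_0(4p)$ has six cusps and positive genus for $p=7,11,23$, $x_p$ is not a Hauptmodul there, and the relevant Riemann--Roch space is much larger than the span of the monomials $x_p^j$ with $1\le j<(p+1)k/8$. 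Concretely, for $p=3$, $k=6$ one has $\dim S_6(\Gamma_0(12))=7$, while only the two monomials $x_3,x_3^2$ are available; so neither your dimension-matching statement nor the weaker ``spanned by powers of $x_p$'' statement can be used to conclude that the cuspidal part of $z_p^k$ lies in the span of $z_p^kx_p^j$. What makes the theorem true is the extra symmetry: $z_p$, $x_p$ and $F_k(\tau;p)$ are invariant (with the appropriate weight factors) under the Atkin--Lehner involutions $W_4$, $W_p$, $W_{4p}$, so everything descends to $\Gamma_0(4p)^+$, which has only the two cusps $i\infty$ and $\tfrac12$. Your proposal never invokes this invariance, and without it the uniform-in-$k$ argument collapses (a Sturm-bound computation would only handle one $k$ at a time).

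For comparison, the paper's proof avoids the Eisenstein/cusp decomposition and the all-cusps bookkeeping entirely: it forms the weight-zero functions $f=F_k(\tau;p)/(z_p^kx_p^\ell)$ and $g=1/x_p$, checks they are invariant under $\Gamma_0(4p)^+$, kills the pole of $f$ at $i\infty$ by subtracting a polynomial in $g$, and then uses Lemma \ref{Fhalf} (the behavior of $F_k$ under $\tau\mapsto-\tfrac1{4p\tau}+\tfrac12$) together with the eta transformation to see that the resulting function is also holomorphic at the only other cusp $\tfrac12$; a function holomorphic on the compact surface $X(\Gamma_0(4p)^+)$ is constant, and the bound $j<(p+1)k/8$ drops out of the choice of $\ell$ and the order of vanishing at $\tfrac12$. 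If you want to salvage your route, you must (i) prove the $W_4$-, $W_p$-, $W_{4p}$-invariance of $z_p$, $x_p$, $F_k$ and restrict to the corresponding invariant subspace of forms, and (ii) replace the dimension count on $S_k(\Gamma_0(4p),\chi)$ by a valence argument on the two-cusped quotient; at that point your argument essentially becomes the paper's.
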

There is an analogue of the Ramanujan--Mordell theorem that involves sum of triangular numbers
instead of sums of squares; that is, replace
the quadratic form $m^2+n^2$ in \eqref{defz} and Theorem~\ref{rammor} with the quadratic 
polynomial 
$m(m+1)/2+n(n+1)/2$. See \cite[pp. 190--191]{ram} and \cite[Theorems 3.5 and~3.6]{sc}.
To state the corresponding analogue of Theorem~\ref{main}, let 
$\tilde{z}_{p}$ and $\tilde{x}_{p}$ be defined by
\begin{equation}
\label{tilz}
\tilde{z}_{p}=\tilde{z}_{p}(\tau):=q^{(p+1)/8}\sum_{m=0}^{\infty}\sum_{n=0}^{\infty}q^{\frac{m(m+1)}{2}+p\frac{n(n+1)}{2}}
\end{equation}
and 
\begin{equation}
\tilde{x}_{p}=\tilde{x}_{p}(\tau):=\left(\frac{\eta_{1}\eta_{p}}{\eta_{2}\eta_{2p}}\right)^{24/(p+1)}.
\end{equation}

\begin{Corollary}
\label{main2}
Suppose $p=3$, $7$, $11$ or $23$ and let $k$ be a positive integer. Then
$$
\tilde{z}_{p}^{k}=\tilde{F}_{k}(\tau;p)+\tilde{z}_{p}^{k}\sum_{1\leq j<\frac{(p+1)k}{8}}\tilde{c}_{p,k,j}\tilde{x}_{p}^{j},
$$
where $\tilde{c}_{p,k,j}$ are numerical constants that depend only on $p$, $k,$ and $j$ and are related to the $c_{p,k,j}$ in Theorem $\ref{main}$ by $\tilde{c}_{p,k,j}=2^{-24j/(p+1)}(-1)^{j}c_{p,k,j}$.
\end{Corollary}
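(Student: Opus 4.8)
The plan is to derive Corollary~\ref{main2} from Theorem~\ref{main} by a single substitution. Writing $\varphi(q)=\sum_{n}q^{n^{2}}$ and $\psi(q)=\sum_{n\ge 0}q^{n(n+1)/2}$, one has $z_{p}=\varphi(q)\varphi(q^{p})$ and $\tilde z_{p}=\bigl(q^{1/8}\psi(q)\bigr)\bigl(q^{p/8}\psi(q^{p})\bigr)$, and the classical evaluations $\varphi(q)=\eta_{2}^{5}/(\eta_{1}^{2}\eta_{4}^{2})$ and $q^{1/8}\psi(q)=\eta_{2}^{2}/\eta_{1}$ give
$$
z_{p}=\frac{\eta_{2}^{5}\eta_{2p}^{5}}{\eta_{1}^{2}\eta_{4}^{2}\eta_{p}^{2}\eta_{4p}^{2}},\qquad
\tilde z_{p}=\frac{\eta_{2}^{2}\eta_{2p}^{2}}{\eta_{1}\eta_{p}} .
$$
A short computation of the orders of $z_{p}$ at the cusps of $\Gamma_{0}(4p)$ shows that $z_{p}$ vanishes to order $\tfrac{p+1}{4}$ at the cusp $\tfrac12$ in the local uniformizer there, which after the appropriate rescaling of cusp widths matches the order $\tfrac{p+1}{8}$ of $\tilde z_{p}$ at $i\infty$. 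I would therefore fix an explicit $\sigma\in\operatorname{GL}_{2}^{+}(\mathbb Q)$ carrying $i\infty$ to the cusp $\tfrac12$ of $\Gamma_{0}(4p)$ and scaled so that $z_{p}\circ\sigma$ has leading term a constant times $q^{(p+1)/8}$, like $\tilde z_{p}$.

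The core of the argument is then the trio of transformation identities
$$
x_{p}(\sigma\tau)=-2^{-24/(p+1)}\,\tilde x_{p}(\tau),\qquad
z_{p}(\sigma\tau)=\mu(\tau)\,\tilde z_{p}(\tau),\qquad
F_{k}(\sigma\tau;p)=\mu(\tau)^{k}\,\tilde F_{k}(\tau;p),
$$
where $\mu$ is the weight-$1$ automorphy factor of $\sigma$. The first identity says that the two weight-$0$ eta quotients $x_{p}\circ\sigma$ and $\tilde x_{p}$ have the same divisor, the scalar $-2^{-24/(p+1)}$ being the value of the modular function $x_{p}$ at the cusp $\tfrac12$; the powers of $2$ in it come from the factors $\sqrt2$ in the $\eta$-transformation formulas under $\sigma$. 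The second is a weight-$1$ theta-function identity, proved by transforming the eta quotient for $z_{p}$ under $\sigma$ and matching it against $\tilde z_{p}$ — agreement of divisors together with one Fourier coefficient suffices, since the ambient space of weight-$1$ forms is tiny. The third is obtained by pushing $\sigma$ through the definitions~\eqref{Fo}--\eqref{Fte}, using the behaviour of $E_{2k}$ and of the generalized Eisenstein series $E_{k}^{0}(\cdot;\chi_{p})$, $E_{k}^{\infty}(\cdot;\chi_{p})$ under the substitutions entering $\sigma$ (translations, a Fricke-type inversion, and integer rescalings); the identity for $G_{2k+1}(\tau+\tfrac12;p)$ displayed after~\eqref{Fte}, together with its even-weight analogue
$$
G_{2k}\!\left(\tau+\tfrac12;p\right)=-G_{2k}(\tau;p)+(2+2^{2k})\,G_{2k}(2\tau;p)-2^{2k}\,G_{2k}(4\tau;p)
$$
(which follows from $E_{2k}(\tau+\tfrac12)=-E_{2k}(\tau)+(2+2^{2k})E_{2k}(2\tau)-2^{2k}E_{2k}(4\tau)$ applied to $E_{2k}(\tau)$ and $E_{2k}(p\tau)$), are representative of the identities that will be needed.

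Granting the trio, the corollary is immediate: apply Theorem~\ref{main} with $\tau$ replaced by $\sigma\tau$, substitute the three identities, and divide through by $\mu(\tau)^{k}$. Since $x_{p}(\sigma\tau)^{j}=\bigl(-2^{-24/(p+1)}\bigr)^{j}\tilde x_{p}(\tau)^{j}=2^{-24j/(p+1)}(-1)^{j}\tilde x_{p}(\tau)^{j}$, the result is exactly the statement of Corollary~\ref{main2}, with $\tilde c_{p,k,j}=2^{-24j/(p+1)}(-1)^{j}c_{p,k,j}$ and with the index range $1\le j<\tfrac{(p+1)k}{8}$ unchanged (no reindexing is required, because $\sigma$ fixes the degree in $x_{p}$).

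The step I expect to be hardest is the third identity, $F_{k}(\sigma\tau;p)=\mu(\tau)^{k}\tilde F_{k}(\tau;p)$. Although the first two identities already force $\tilde z_{p}^{k}$ into the asserted shape apart from the identification of its Eisenstein part, verifying that the $\sigma$-transform of $F_{k}$ really does reassemble — with the exact powers of $2$ and signs — into the particular combinations of $G_{2k}$, $G_{2k+1}$ and $\tilde G_{2k+1}$ that define $\tilde F_{k}$ in~\eqref{Fo}--\eqref{Fte} calls for careful, case-by-case bookkeeping over $p\in\{3,7,11,23\}$ and over the parity of $k$, using the transformation laws of the generalized Eisenstein series. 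A smaller but still delicate point is pinning down $\sigma$, equivalently the exact constant $-2^{-24/(p+1)}$ in the first identity, which hinges on the $\eta$-multiplier-system computation.
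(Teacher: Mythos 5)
Your proposal takes essentially the same route as the paper: the paper deduces the corollary from Theorem~\ref{main} via the substitution $\tau\mapsto -\frac{1}{2p\tau}+\frac{1}{2}$ (your $\sigma$), citing precisely your trio of transformation identities for $z_p$, $x_p$ and $F_k$ (with automorphy factor $\frac{4\sqrt{p}\,\tau}{i}$) as unproved ``observations,'' so your sketch of how to establish them only fills in details the paper omits. Note that your form of the middle identity, $x_p(\sigma\tau)=-2^{-24/(p+1)}\,\tilde{x}_p(\tau)$, is exactly what the constant relation $\tilde{c}_{p,k,j}=2^{-24j/(p+1)}(-1)^{j}c_{p,k,j}$ requires (one can check it against the $p=3$, $k=3$ example), whereas the corresponding displayed observation in the paper contains a misprint ($\tilde{x}_p$ appearing on both sides).
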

%

In the remainder of this section we describe some special cases of Theorem~\ref{main} and Corollary~\ref{main2}.
Let $\varphi(q)$ and $\psi(q)$ be Ramanujan's theta functions defined by
$$
\varphi(q)=\sum_{n=-\infty}^{\infty}q^{n^{2}}\quad\mbox{and}\quad
\psi(q)=\sum_{n=0}^{\infty}q^{\frac{n(n+1)}{2}}.
$$
Because of the occurrence of the argument $\tau/2$ in one of the terms in \eqref{Fto},
we have replaced $\tau$ with $2\tau$, and hence $q$ with $q^2$, to obtain the examples \eqref{t3k1},
\eqref{t11k1} and \eqref{t3k3}, below.
\begin{Example}
For $k=1$ and $p=3,\,7,\,11$ or $23$, Theorem $\ref{main}$ and Corollary~$\ref{main2}$ give
\begin{align}
\label{s3k1}
\varphi(q)\varphi(q^{3})&=1+2\sum_{j=1}^{\infty}\left(\frac{j}{3}\right)\left(\frac{q^{j}}{1-q^{j}}+\frac{2q^{4j}}{1-q^{4j}}\right),\\
\label{t3k1}
q\psi(q^2)\psi(q^{6})&=\sum_{j=1}^{\infty}\left(\frac{j}{3}\right)\left(\frac{q^{j}}{1-q^{j}}-\frac{q^{4j}}{1-q^{4j}}\right),\\
\label{s7k1}
\varphi(q)\varphi(q^{7})&=1+2\sum_{j=1}^{\infty}\left(\frac{j}{7}\right)\left(\frac{q^{j}}{1-q^{j}}-\frac{2q^{2j}}{1-q^{2j}}+\frac{2q^{4j}}{1-q^{4j}}\right),\\
\label{t7k1}
q\psi(q)\psi(q^{7})&=\sum_{j=1}^{\infty}\left(\frac{j}{7}\right)\left(\frac{q^{j}}{1-q^{j}}-\frac{q^{2j}}{1-q^{2j}}\right),\\
\label{s11k1}
\varphi(q)\varphi(q^{11})&=1+\frac{2}{3}\sum_{j=1}^{\infty}\left(\frac{j}{11}\right)\left(\frac{q^{j}}{1-q^{j}}+\frac{2q^{4j}}{1-q^{4j}}\right)+\frac{4}{3}\eta_{2}\eta_{22},\\
\label{t11k1}
q^{3}\psi(q^{2})\psi(q^{22})&=\frac{1}{3}\sum_{j=1}^{\infty}\left(\frac{j}{11}\right)\left(\frac{q^{j}}{1-q^{j}}-\frac{q^{4j}}{1-q^{4j}}\right)-\frac{1}{3}\eta_{2}\eta_{22},\\
\label{s23k1}
\varphi(q)\varphi(q^{23})&=1+\frac{2}{3}\sum_{j=1}^{\infty}\left(\frac{j}{23}\right)\left(\frac{q^{j}}{1-q^{j}}-\frac{2q^{2j}}{1-q^{2j}}+\frac{2q^{4j}}{1-q^{4j}}\right)\\
&\qquad+\frac{4}{3}\frac{\eta_{2}^{3}\eta_{46}^{3}}{\eta_{1}\eta_{4}\eta_{23}\eta_{92}}-\frac{4}{3}\eta_{2}\eta_{46},\nonumber\\
\label{t23k1}
q^{3}\psi(q)\psi(q^{23})&=\frac{1}{3}\sum_{j=1}^{\infty}\left(\frac{j}{23}\right)\left(\frac{q^{j}}{1-q^{j}}-\frac{q^{2j}}{1-q^{2j}}\right)-\frac{1}{3}\eta_{1}\eta_{23}-\frac{2}{3}\eta_{2}\eta_{46}.
\end{align}
\end{Example}

The identity \eqref{s3k1} was first stated in an equivalent form by Lorenz \cite[p. 420]{lorenz}.
Both \eqref{s3k1} and \eqref{t3k1} were given by Ramanujan in his second notebook \cite[Ch. 19, Entry 3]{ramanujan1}.
See Berndt \cite[pp. 223--224]{berndt}, Fine \cite[p. 73, (31.16), (31.22)]{fine} and Hirschhorn \cite{hirschhorn} for proofs and further information.

The identities \eqref{s7k1} and \eqref{t7k1} also appear in Ramanujan's second notebook \cite[Ch. 19, Entry 17]{ramanujan1}
and proofs have been given by Berndt \cite[pp. 302--304]{berndt}.
The identity \eqref{s7k1} has also been proved by Pall \cite{pall}. 

The identities \eqref{s11k1} and \eqref{s23k1} have been recently proved by the third author \cite{dye}.

  The identities \eqref{t11k1} and \eqref{t23k1} are new.

\begin{Example}
For $p=3$, the cases $k=2,\,3,\,4$ and $6$ of Theorem $\ref{main}$ and Corollary~$\ref{main2}$ give
\begin{align}
\label{s3k2}
\varphi^{2}(q)\varphi^{2}(q^{3})&=1+4\sum_{j=1}^{\infty}\left(\frac{jq^{j}}{1-q^{j}}-\frac{2jq^{2j}}{1-q^{2j}}-\frac{3jq^{3j}}{1-q^{3j}}+\frac{4jq^{4j}}{1-q^{4j}}+\frac{6jq^{6j}}{1-q^{6j}}-\frac{12jq^{12j}}{1-q^{12j}}\right),\\
\label{t3k2}
q\psi^2(q)\psi^2(q^3) &= \sum_{j=1}^\infty \left(\frac{jq^j}{1-q^j}-\frac{jq^{2j}}{1-q^{2j}}-\frac{3jq^{3j}}{1-q^{3j}}+\frac{3jq^{6j}}{1-q^{6j}}\right), \\
\label{s3k3}
\varphi^3(q)\varphi^3(q^3)
&= 1+3\sum_{j=1}^\infty \left(\frac{j^2(q^{j}-q^{2j})}{1-q^{3j}} +  \frac{8j^2(q^{4j}-q^{8j})}{1-q^{12j}}\right) \\
&\qquad 
-\sum_{j=1}^\infty \left(\frac{j}{3}\right)\left(\frac{j^2q^j}{1-q^j}+\frac{8j^2q^{4j}}{1-q^{4j}}\right)+4\eta_2^3\eta_6^3, \nonumber \\
\label{t3k3}
q^3\psi^3(q^2)\psi^3(q^6) &= \frac{3}{32}\sum_{j=1}^\infty \left(\frac{j^2(q^{j}-q^{2j})}{1-q^{3j}} - \frac{3j^2(q^{2j}-q^{4j})}{1-q^{6j}}- \frac{4j^2(q^{4j}-q^{8j})}{1-q^{12j}}\right) \\
&\qquad -\frac{1}{32}\sum_{j=1}^\infty \left(\frac{j}{3}\right)\left(\frac{j^2q^j}{1-q^j}+\frac{3j^2q^{2j}}{1-q^{2j}}-\frac{4j^2q^{4j}}{1-q^{4j}}\right)
-\frac{1}{16}\eta_2^3\eta_6^3, \nonumber \\
\label{s3k4}
\varphi^4(q)\varphi^4(q^3) &= 1+\frac85\sum_{j=1}^\infty \left(\frac{j^3q^j}{1-q^j}-\frac{2j^3q^{2j}}{1-q^{2j}}
+\frac{9j^3q^{3j}}{1-q^{3j}}+\frac{16j^3q^{4j}}{1-q^{4j}} -\frac{18j^3q^{6j}}{1-q^{6j}}+\frac{144j^3q^{12j}}{1-q^{12j}}\right) \\
&\qquad +\frac{32}{5}\frac{\eta_{2}^{8}\eta_{6}^{8}}{\eta_{1}^{2}\eta_{3}^{2}\eta_{4}^{2}\eta_{12}^{2}}, \nonumber \\
\label{t3k4}
q^2\psi^4(q)\psi^4(q^3) &= \frac{1}{10}\sum_{j=1}^\infty \left(\frac{j^3q^j}{1-q^j}-\frac{j^3q^{2j}}{1-q^{2j}}+\frac{9j^3q^{3j}}{1-q^{3j}}-\frac{9j^3q^{6j}}{1-q^{6j}}\right) -\frac{1}{10}\eta_1^2\eta_2^2\eta_3^2\eta_6^2, \\
\label{s3k6}
\varphi^6(q)\varphi^6(q^3) &= 1+\frac{4}{13}\sum_{j=1}^\infty \left(\frac{j^5q^j}{1-q^j}-\frac{2j^5q^{2j}}{1-q^{2j}}
-\frac{27j^5q^{3j}}{1-q^{3j}}+\frac{64j^5q^{4j}}{1-q^{4j}} +\frac{54j^5q^{6j}}{1-q^{6j}}-\frac{1728j^5q^{12j}}{1-q^{12j}}\right) \\
&\qquad +\frac{152}{13}\frac{\eta_{2}^{18}\eta_{6}^{18}}{\eta_{1}^{6}\eta_{3}^{6}\eta_{4}^{6}\eta_{12}^{6}}-\frac{256}{13}\eta_2^6\eta_6^6, \nonumber \\
\label{t3k6}
q^3\psi^6(q)\psi^6(q^3) &= \frac{1}{208}\sum_{j=1}^\infty \left(\frac{j^5q^j}{1-q^j}-\frac{j^5q^{2j}}{1-q^{2j}}
-\frac{27j^5q^{3j}}{1-q^{3j}} +\frac{27j^5q^{6j}}{1-q^{6j}}\right) -\frac{1}{208}\eta_1^6\eta_3^6-\frac{19}{104}\eta_2^6\eta_6^6. 
\end{align}
\end{Example}

The identity \eqref{s3k2} was first stated without proof in an equivalent form by Liouville \cite{liouville1, liouville2}. See Pepin \cite{pepin}, Bachmann \cite{bachmann}, Kloosterman \cite{kloo} and Alaca et al. \cite{alaca1} for proofs. Both \eqref{s3k2} and \eqref{t3k2} appear in Ramanujan's second notebook
\cite[Ch. 19, Entry 3]{ramanujan1}. Proofs have been given by Fine \cite[(31.4)--(31.43) and (33.2)]{fine} and Berndt \cite[pp. 223--226]{berndt}.

A formula equivalent to \eqref{s3k3} was proved by Alaca et al. in \cite{alaca2}, where it was attributed to Berkovich and Ye$\acute{\mbox{s}}$ilyurt. 

The identity \eqref{s3k4} was proved by Alaca and Williams \cite{alaca3}. A formula similar to \eqref{s3k4},
in which the coefficients in $\eta_2^2\eta_2^2\eta_3^2\eta_6^2$ are given as a quadruple sum, has been given by Beridze \cite{beridze}.

A formula equivalent to \eqref{s3k6} was given by Alaca \cite{alaca4}; that formula involves
three cusp forms on the right hand side, while ours involves only two\footnote{This is because,
if $b(q)=\eta_1^6\eta_3^6$, then $b(q)+12b(q^2)+64b(q^4)+b(-q)=0.$}.

The identities \eqref{t3k3}, \eqref{t3k4} and \eqref{t3k6} are believed to be new.

\begin{Example} For $p=7$, the cases $k=2$ and $3$ of Theorem~$\ref{main}$ give
\begin{align}
\label{s7k2}
\varphi^2(q)\varphi^2(q^7) &= 1+\frac43\sum_{j=1}^\infty \left(\frac{jq^j}{1-q^j}-\frac{2jq^{2j}}{1-q^{2j}}+\frac{4jq^{4j}}{1-q^{4j}}-\frac{7jq^{7j}}{1-q^{7j}}+\frac{14jq^{14j}}{1-q^{14j}}-\frac{28jq^{28j}}{1-q^{28j}}\right) \\
&\qquad +\frac83\frac{\eta_{2}^{4}\eta_{14}^{4}}{\eta_{1}\eta_{4}\eta_{7}\eta_{28}}, \nonumber \\
\label{t7k2}
q^2\psi^2(q)\psi^2(q^7) &= \frac13\sum_{j=1}^\infty \left(\frac{jq^j}{1-q^j}-\frac{jq^{2j}}{1-q^{2j}}-\frac{7jq^{7j}}{1-q^{7j}}+\frac{7jq^{14j}}{1-q^{14j}}\right) \\
&\qquad -\frac13\eta_1\eta_2\eta_7\eta_{14}, \nonumber \\
\label{s7k3}
\varphi^3(q)\varphi^3(q^7)
&= 1+\frac78\sum_{j=1}^\infty \frac{j^2(q^{j}+q^{2j}-q^{3j}+q^{4j}-q^{5j}-q^{6j})}{1-q^{7j}}  \\
&\qquad -\frac{7}{4}\sum_{j=1}^\infty \frac{j^2(q^{2j}+q^{4j}-q^{6j}+q^{8j}-q^{10j}-q^{12j})}{1-q^{14j}} \nonumber \\
&\qquad + 7 \sum_{j=1}^\infty \frac{j^2(q^{4j}+q^{8j}-q^{12j}+q^{16j}-q^{20j}-q^{24j})}{1-q^{28j}}\nonumber\\
&\qquad -\frac18\sum_{j=1}^\infty \left(\frac{j}{7}\right)\left(\frac{j^2q^j}{1-q^j}-\frac{2j^2q^{2j}}{1-q^{2j}}+\frac{8j^2q^{4j}}{1-q^{4j}}\right)\nonumber\\
&\qquad+\frac{21}{4}\frac{\eta_{2}^{9}\eta_{14}^{9}}{\eta_{1}^{3}\eta_{4}^{3}\eta_{7}^{3}\eta_{28}^{3}}-6\eta_2^3\eta_{14}^3,\nonumber \\
\label{t7k3}
q^3\psi^3(q)\psi^3(q^7) &= \frac7{64}\sum_{j=1}^\infty \frac{j^2(q^{j}+q^{2j}-q^{3j}+q^{4j}-q^{5j}-q^{6j})}{1-q^{7j}}  \\
&\qquad -\frac{7}{64}\sum_{j=1}^\infty \frac{j^2(q^{2j}+q^{4j}-q^{6j}+q^{8j}-q^{10j}-q^{12j})}{1-q^{14j}} \nonumber \\
&\qquad -\frac{1}{64}\sum_{j=1}^\infty \left(\frac{j}{7}\right)\left(\frac{j^2q^j}{1-q^j}-\frac{j^2q^{2j}}{1-q^{2j}}\right)
-\frac{3}{32}\eta_1^3\eta_7^3-\frac{21}{32}\eta_2^3\eta_{14}^3. \nonumber
\end{align}
\end{Example}

Identities equivalent to \eqref{s7k2} and \eqref{t7k2} have been proved in \cite{level14}.
The identities \eqref{s7k3} and \eqref{t7k3} arise in the theory of 7-cores and
were proved by Berkovich and Yesilyurt \cite{berkovich}.

\section{Proofs}
\label{proofs}
For any positive integer $N$, let us define
$$
\Gamma_{0}(N)=\Bigg\{\begin{pmatrix}a&b\\c&d\end{pmatrix}:a,b,c,d\in\mathbb{Z}, ad-bc=1, c\equiv{0}\pmod{N}\Bigg\}.
$$
We require the explicit modularity properties of the Eisenstein series on $\Gamma_0(p)$ as well as the Atkin--Lehner involution $W_p$.
\begin{Lemma}
\label{lemma1}
For $p=3$, $7$, $11$ or $23$, and any integer $k\geq0$, and for any $\begin{pmatrix}a&b\\c&d\end{pmatrix}\in\Gamma_{0}(p)$, we have
\begin{align}
E_{2k+1}^{0}\left(\frac{a\tau+b}{c\tau+d};\chi_{p}\right)&=\left(\frac{d}{p}\right)(c\tau+d)^{2k+1}E_{2k+1}^{0}(\tau;\chi_{p}),\\
E_{2k+1}^{\infty}\left(\frac{a\tau+b}{c\tau+d};\chi_{p}\right)&=\left(\frac{d}{p}\right)(c\tau+d)^{2k+1}E_{2k+1}^{\infty}(\tau;\chi_{p}),\\
\label{Eptrans1}
E_{2k+1}^{\infty}\left(-\frac{1}{p\tau};\chi_{p}\right)&=\frac{1}{i\sqrt{p}}(p\tau)^{2k+1}E_{2k+1}^{0}\left(\tau;\chi_{p}\right),\\
\label{Eptrans2}
E_{2k+1}^{0}\left(-\frac{1}{p\tau};\chi_{p}\right)&=\frac{\sqrt{p}}{i}\tau^{2k+1}E_{2k+1}^{\infty}\left(\tau;\chi_{p}\right).
\end{align}
\end{Lemma}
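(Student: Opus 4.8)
The plan is to establish the four transformation formulas by reducing everything to the classical transformation theory of Eisenstein series attached to the Dirichlet character $\chi_p$. Recall that for a primitive character $\chi$ modulo $p$ and an integer $k\geq 1$ with $\chi(-1)=(-1)^k$, the standard weight-$k$ Eisenstein series $E_k(\tau;\chi)$ transforms as a modular form of weight $k$ and character $\chi$ on $\Gamma_0(p)$: explicitly $E_k\!\left(\tfrac{a\tau+b}{c\tau+d};\chi\right)=\chi(d)(c\tau+d)^k E_k(\tau;\chi)$ for $\begin{pmatrix}a&b\\c&d\end{pmatrix}\in\Gamma_0(p)$. The series $E^{\infty}_{2k+1}(\tau;\chi_p)$ and $E^{0}_{2k+1}(\tau;\chi_p)$ as defined in Section~\ref{defmain} are, up to the normalizing constant $-2k/\mathcal{B}_{k,p}$ (and the $\delta_{k,1}$ term, which only matters when $k=0$ and is handled separately), exactly the two Eisenstein series spanning the Eisenstein subspace at the two cusps of $\Gamma_0(p)$; indeed, comparing $q$-expansions one sees $E^{\infty}_{2k+1}(\tau;\chi_p)$ is the one normalized to have constant term $1$ at $i\infty$, and $E^{0}_{2k+1}(\tau;\chi_p)$ is supported on the cusp $0$.

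The first two identities then follow immediately from this classical weight-$k$, character-$\chi_p$ modularity on $\Gamma_0(p)$, since $\chi_p(d)=\left(\tfrac{d}{p}\right)$. For the $k=0$ case (weight one), one checks directly that the extra constant $\delta_{k,1}$ in the definition of $E^0_1$ is consistent — here a small separate verification using Hecke's theory of weight-one Eisenstein series, or a direct $q$-expansion argument together with known identities such as $\varphi(q)\varphi(q^p)$ being modular of weight one, closes the gap. For the Atkin--Lehner relations \eqref{Eptrans1} and \eqref{Eptrans2}, the idea is that $W_p\colon \tau\mapsto -1/(p\tau)$ interchanges the cusps $0$ and $i\infty$ of $\Gamma_0(p)$, hence must send $E^{\infty}_{2k+1}$ to a scalar multiple of $E^{0}_{2k+1}$ (times the automorphy factor $(p\tau)^{2k+1}$ coming from weight $2k+1$) and vice versa. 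The scalar is the Gauss-sum normalization $1/(i\sqrt p)$: this is pinned down by matching leading Fourier coefficients across the transformation, or equivalently by invoking the functional equation of the Dirichlet $L$-function $L(s,\chi_p)$, whose root number for the odd quadratic character $\chi_p$ is $1/i$ times the appropriate power of $\sqrt p$. Applying $W_p$ twice (and using $\chi_p(-1)=-1$ since $p\equiv 3\pmod 4$ for each of $p=3,7,11,23$, which forces $2k+1$ odd with $\chi_p(-1)=-1$) recovers \eqref{Eptrans2} from \eqref{Eptrans1} as a consistency check.

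The main obstacle I expect is not the abstract transformation law but getting the constant in \eqref{Eptrans1} exactly right, including the sign and the factor of $i$. This requires careful bookkeeping of the Gauss sum $\sum_{\ell=1}^{p-1}\left(\tfrac{\ell}{p}\right)e^{2\pi i\ell/p}=i\sqrt p$ (valid precisely because $p\equiv 3\pmod 4$), and relating it to the normalizing constant $-2k/\mathcal{B}_{k,p}$ via the standard formula $L(1-k,\chi_p)=-\mathcal{B}_{k,p}/k$. The cleanest route is probably to start from the Lipschitz/Hurwitz-zeta representation of $E^{\infty}_{2k+1}$, apply Poisson summation (or directly quote the transformation of the Eisenstein series from Miyake or Diamond--Shurman), and then track how the twist by $\chi_p$ and the scaling $\tau\mapsto p\tau$ in the definition of $E^0$ conspire to produce exactly $1/(i\sqrt p)$; the finitely many primes involved mean one can, if desired, also simply verify the identity numerically to high precision as an independent sanity check.
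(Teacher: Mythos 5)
Your proposal is correct in substance, but note that the paper itself does not prove Lemma \ref{lemma1}: its ``proof'' is a one-line citation to Cooper \cite{cooper} and Kolberg \cite{kolberg}, where essentially the computation you sketch is carried out. Your route --- identifying $E^{\infty}_{2k+1}(\tau;\chi_p)$ and $E^{0}_{2k+1}(\tau;\chi_p)$ with the classical Eisenstein series attached to the character $\chi_p$ at the cusps $i\infty$ and $0$ of $\Gamma_0(p)$, quoting weight-$(2k+1)$ modularity with nebentypus $\chi_p$ for the first two identities (legitimate since $\chi_p(-1)=-1$ for $p\equiv 3\pmod 4$), and extracting the Fricke relations from the Gauss sum $\sum_{\ell=1}^{p-1}\left(\frac{\ell}{p}\right)e^{2\pi i\ell/p}=i\sqrt{p}$ --- is exactly the standard argument underlying those references, so you are supplying what the authors chose to quote. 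Two remarks. First, your ``consistency check'' is actually stronger than you claim: applying \eqref{Eptrans1} at $-1/(p\tau)$ and using $(-1/\tau)^{2k+1}=-\tau^{-(2k+1)}$ together with $\sqrt{p}/i=-i\sqrt{p}$ gives a complete derivation of \eqref{Eptrans2}, so only one Fricke constant needs to be computed; likewise the weight-one case $k=0$ is tamed by observing that $E^{0}_{1}(\tau;\chi_p)=E^{\infty}_{1}(\tau;\chi_p)$ (their divisor sums coincide when $k-1=0$), which is cleaner than appealing to modularity of $\varphi(q)\varphi(q^p)$ --- that would be circular here, since the paper uses the lemma to prove such identities. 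Second, the one place where real work remains is pinning down the constant $1/(i\sqrt{p})$: ``matching leading Fourier coefficients across the transformation'' is too vague to do this (the map $\tau\mapsto -1/(p\tau)$ does not act coefficientwise), and the root-number phrasing is slightly off (for an odd real primitive character the root number is $+1$; the factor $i\sqrt{p}$ lives in the Gauss sum). The rigorous step is the lattice-sum/Poisson-summation computation, or the explicit action of $W_p$ on character Eisenstein series as in Miyake or Diamond--Shurman, which you correctly identify as the cleanest route; that step should be executed rather than offered as one option alongside numerical verification, though for the purposes of this paper a citation, as the authors themselves give, already suffices.
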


\begin{proof}
This is a well-known result, e.g., see Cooper \cite{cooper} or Kolberg \cite{kolberg}.
\end{proof}

\begin{Lemma}
\label{F1/2}
For any positive integer $k$, we have
\begin{align}
\label{E2kvan}
E_{2k}\left(\tau+\frac{1}{2}\right)=-E_{2k}(\tau)+(2^{2k}+2)E_{2k}(2\tau)-2^{2k}E_{2k}(4\tau).
\end{align}
For $p=3$ or $11$ and any nonnegative integer $k$, we have
\begin{align}
\label{E2k/2}
E_{2k+1}^{0}\left(\tau+\frac{1}{2};\chi_{p}\right)&=-E_{2k+1}^{0}(\tau;\chi_{p})+(2^{2k+1}-2)E_{2k+1}^{0}(2\tau;\chi_{p})+2^{2k+1}E_{2k+1}^{0}(4\tau;\chi_{p}),\\
\label{E2k3}
E_{2k+1}^{\infty}\left(\tau+\frac{1}{2};\chi_{p}\right)&=-E_{2k+1}^{\infty}(\tau;\chi_{p})+(2-2^{2k+1})E_{2k+1}^{\infty}(2\tau;\chi_{p})+2^{2k+1}E_{2k+1}^{\infty}(4\tau;\chi_{p}).
\end{align}
For $p=7$ or $23$ and any nonnegative integer $k$, we have
\begin{align}
E_{2k+1}^{0}\left(\tau+\frac{1}{2};\chi_{p}\right)&=-E_{2k+1}^{0}(\tau;\chi_{p})+(2^{2k+1}+2)E_{2k+1}^{0}(2\tau;\chi_{p})-2^{2k+1}E_{2k+1}^{0}(4\tau;\chi_{p}),\\
\label{E7half}
E_{2k+1}^{\infty}\left(\tau+\frac{1}{2};\chi_{p}\right)&=-E_{2k+1}^{\infty}(\tau;\chi_{p})+(2^{2k+1}+2)E_{2k+1}^{\infty}(2\tau;\chi_{p})-2^{2k+1}E_{2k+1}^{\infty}(4\tau;\chi_{p}).
\end{align}
\end{Lemma}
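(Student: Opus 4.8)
The plan is to prove each identity in Lemma \ref{F1/2} by the same elementary device: the substitution $\tau \mapsto \tau + 1/2$ sends $q = e^{2\pi i\tau}$ to $-q$, so each of the four formulas is really an identity among the $q$-series $E(\tau)$, $E(-q)$, and the dilates $E(2\tau)$, $E(4\tau)$. Writing the relevant Eisenstein series in the Lambert-series form given in \eqref{E2k}, \eqref{Bkp} and the definitions of $E_k^0(\tau;\chi_p)$, $E_k^\infty(\tau;\chi_p)$, I would reduce every claim to a divisor-sum (or twisted divisor-sum) identity that can be checked coefficient by coefficient. So the first step is to record, for a generic Lambert series $L(\tau) = \sum_{j\ge 1} a_j \dfrac{q^j}{1-q^j} = \sum_{n\ge 1}\Big(\sum_{d\mid n} a_d\Big)q^n$, how $L(\tau+1/2)$, $L(2\tau)$, $L(4\tau)$ expand: $L(\tau+1/2) = \sum_{n\ge1}(-1)^n\big(\sum_{d\mid n}a_d\big)q^n$, and $L(m\tau)$ picks out $n$ divisible by $m$. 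With $a_j = j^{2k-1}$ (for \eqref{E2kvan}) or $a_j = \chi_p(j)\,j^{2k}$ (for the $E^\infty$ cases) or the corresponding coefficients at the cusp $0$, matching the coefficient of $q^n$ on both sides becomes a purely arithmetic statement about $\sum_{d\mid n} d^{r}$ restricted to $n$ in various residue classes mod $4$.

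Concretely, for \eqref{E2kvan} I would split into the cases $n$ odd, $n\equiv 2\pmod 4$, and $n\equiv 0\pmod 4$, and verify in each case that the coefficient of $q^n$ on the right equals $(-1)^n\sigma_{2k-1}(n)$, using the standard multiplicativity identity $\sigma_{2k-1}(n) = \sigma_{2k-1}(n_{\mathrm{odd}})\cdot\dfrac{2^{2k(v+1)}-1}{2^{2k}-1}$ when $2^v\,\|\,n$; this is exactly the classical relation behind the level-lowering operators, and the coefficients $-1$, $2^{2k}+2$, $-2^{2k}$ are pinned down by imposing the identity for $v=0,1,2$. For the twisted series $E^\infty_{2k+1}(\tau;\chi_p)$, the arithmetic function is $n\mapsto \sum_{d\mid n}\chi_p(d)d^{2k}$, which is again multiplicative and unramified at $2$, so the same three-case analysis gives \eqref{E2k3} and \eqref{E7half}; the sign discrepancy between the $p=3,11$ case and the $p=7,23$ case traces back solely to whether $\chi_p(2) = \big(\tfrac2p\big)$ equals $-1$ or $+1$ (namely $\big(\tfrac23\big)=\big(\tfrac2{11}\big)=-1$ while $\big(\tfrac27\big)=\big(\tfrac2{23}\big)=+1$), which flips the factor $\dfrac{(2\chi_p(2))^{v+1}-1}{2\chi_p(2)-1}$ and hence the combination of dilates. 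The series at the cusp $0$, $E^0_{2k+1}(\tau;\chi_p)$, have the shape $\sum_{j\ge1} j^{2k}\big(\sum_{\ell=1}^{p-1}\chi_p(\ell)q^{j\ell}\big)/(1-q^{pj})$; reindexing by $n = j\ell + pj\cdot(\text{anything})$ one sees the coefficient of $q^n$ is $\sum_{d\mid n,\ p\nmid d}\chi_p(n/d)\,d^{2k}$ up to bookkeeping, and the same $2$-adic factorization applies, yielding \eqref{E2k/2} and its $p=7,23$ analogue. The $\delta_{k,1}$/$\delta_{k,0}$ constant terms cause no trouble since constant terms are fixed ($(-q)^0 = q^0$) and one checks the numerical coefficient on the right sums to $1$ for the relevant small exponents.

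The main obstacle — really the only delicate point — is bookkeeping the $2$-adic Euler factor correctly and making sure the four stated coefficient vectors are the unique ones consistent with it; there is no deep content, but it is easy to drop a sign or a power of $2$, especially in distinguishing the $E^0$ expansions (where the $p$-adic and $2$-adic structure interact through the range $1\le\ell\le p-1$) from the cleaner $E^\infty$ expansions. A clean way to organize this and sidestep some of the arithmetic is to invoke the modularity already established: by Lemma \ref{lemma1} each of $E^0$, $E^\infty$, $E_{2k}$ is a modular form of known weight on (a conjugate of) $\Gamma_0(p)$, so $E(\tau+1/2)$ is a modular form on a group commensurable with $\Gamma_0(4p)$, the space of such Eisenstein series is finite-dimensional with a basis given by the dilates $E(\tau), E(2\tau), E(4\tau)$ (and, for the odd-weight twisted case, $\tilde G$-type twists), and so the claimed identity holds as soon as it holds to enough $q$-orders — a finite check. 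I would present the elementary divisor-sum proof as the main argument (it is self-contained and gives the coefficients transparently), and remark that the finite-dimensionality argument provides an independent verification.
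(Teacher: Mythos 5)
Your proposal is correct and follows essentially the same route as the paper: expand each series as a $q$-series with (twisted) divisor-sum coefficients, use $\tau\mapsto\tau+\tfrac12$ as $q\mapsto -q$, and reduce to the relation expressing $\sigma(2n)$ in terms of $\sigma(n)$ and $\sigma(n/2)$, with the sign split between $p=3,11$ and $p=7,23$ governed by $\left(\frac{2}{p}\right)=\mp1$, exactly as in the paper's proof. The only quibble is a harmless bookkeeping slip in your description of the $E^0$ coefficients (the correct coefficient is $\sum_{d\mid n}\chi_p(n/d)d^{2k}$, the restriction being enforced automatically by $\chi_p$ vanishing on multiples of $p$), which does not affect the $2$-adic analysis that drives the identities.
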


\begin{proof}
First of all, from the definitions of $E_{2k}(\tau)$, $E_{2k+1}^{0}(\tau;\chi_{p})$ and $E_{2k+1}^{\infty}(\tau;\chi_{p})$,
we may deduce the Fourier expansions
\begin{align}
E_{2k}(\tau)&=1-\frac{4k}{\mathcal{B}_{2k}}\sum_{n=1}^{\infty}\sigma_{2k-1}(n)q^{n},\\
E_{2k+1}^{0}(\tau;\chi_{p})&=\delta_{2k+1,1}-\frac{4k+2}{\mathcal{B}_{2k+1,p}}\sum_{n=1}^{\infty}\left(\sum_{d|n}\left(\frac{n/d}{p}\right)d^{2k}\right)q^{n}
\intertext{and}
E_{2k+1}^{\infty}(\tau;\chi_{p})&=1-\frac{4k+2}{\mathcal{B}_{2k+1,p}}\sum_{n=1}^{\infty}\left(\sum_{d|n}\left(\frac{d}{p}\right)d^{2k}\right)q^{n},
\end{align}
where $\sigma_k(n) = \sum_{d|n} d^k$ if $n$ is a positive integer.
For \eqref{E2kvan}, we first observe that
$$
\sigma_{k}(2n)=\left(1+2^{k}\right)\sigma_{k}(n)-2^{k}\sigma_{k}\left(n/2\right),
$$
where $\sigma_k(n/2)$ is defined to be zero
if $n/2$ is not a positive integer.
Then,
\begin{align*}
&E_{2k}\left(\tau+\frac{1}{2}\right)+E_{2k}(\tau)\\
&=2\left(1-\frac{4k}{\mathcal{B}_{2k}}\sum_{n=1}^{\infty}\sigma_{2k-1}(2n)q^{2n}\right)\\
&=2\left(1-\frac{4k}{\mathcal{B}_{2k}}\sum_{n=1}^{\infty}\left((1+2^{2k-1})\sigma_{2k-1}(n)-2^{2k-1}\sigma_{2k-1}\left(n/2\right)\right)q^{2n}\right)\\
&=2\left(\left(1+2^{2k-1}\right)\left(1-\frac{4k}{\mathcal{B}_{2k}}\sum_{n=1}^{\infty}\sigma_{2k-1}(n)q^{2n}\right)-2^{2k-1}\left(1-\frac{4k}{\mathcal{B}_{2k}}\sum_{n=1}^{\infty}\sigma_{2k-1}(n)q^{4n}\right)\right)\\
&=\left(2+2^{2k}\right)E_{2k}(2\tau)-2^{2k}E_{2k}(4\tau).
\end{align*}
If we let $\sigma_{2k+1}^{0}(n)$ and $\sigma_{2k+1}^{\infty}(n)$ be defined by
$$
\sigma_{2k+1}^{0}(n)=\sum_{d|n}\left(\frac{n/d}{p}\right)d^{2k},\quad\mbox{and}\quad \sigma_{2k+1}^{\infty}(n)=\sum_{d|n}\left(\frac{d}{p}\right)d^{2k},
$$
then similarly, we find that
\begin{align*}
\sigma_{2k+1}^{0}(2n)&=\begin{cases}(2^{2k}-1)\sigma_{2k+1}^{0}(n)+2^{2k}\sigma_{2k+1}^{0}(n/2),&\mbox{for $p=3$ or $11$,}\\
(1+2^{2k})\sigma_{2k+1}^{0}(n)-2^{2k}\sigma_{2k+1}^{0}(n/2),&\mbox{for $p=7$ or $23$,}\end{cases}
\intertext{and}
\sigma_{2k+1}^{\infty}(2n)&=\begin{cases}(1-2^{2k})\sigma_{2k+1}^{\infty}(n)+2^{2k}\sigma_{2k+1}^{\infty}(n/2),&\mbox{for $p=3$ or $11$,}\\
(1+2^{2k})\sigma_{2k+1}^{\infty}(n)-2^{2k}\sigma_{2k+1}^{\infty}(n/2),&\mbox{for $p=7$ or $23$.}\end{cases}
\end{align*}
By the above observations, identities \eqref{E2k/2}--\eqref{E7half} can be proved in the same fashion, so we omit the details.
\end{proof}

\begin{Lemma}
\label{Fhalf}
For any positive integer $k$, we have
\begin{equation}
\label{F2k}
F_{2k}\left(-\frac{1}{4p\tau}+\frac{1}{2};p\right) =
\frac{\tau^{2k}(-16p)^k\left( G_{2k}(2\tau)-G_{2k}(4\tau)\right)}
{(2^{2k}-1)(1+(-p)^k)}.
\end{equation}
For $p=3$ or $11$ and any non-negative integer $k$, we have
\begin{align}
\label{Fodd3}
&F_{2k+1}\left(-\frac{1}{4p\tau}+\frac{1}{2};p\right)\\
&=\left(2i\tau\sqrt{p}\right)^{2k+1}
\times \frac{\left(2^{2k+1}G_{2k+1}(4\tau;p) - 2(2^{2k}-1)\tilde{G}_{2k+1}(2\tau;p)
-2G_{2k+1}(\tau;p)\right)}{(2^{2k+1}+1)(1+\delta_{k,0})}.\nonumber
\end{align}
For $p=7$ or $23$ and any non-negative integer $k$, we have
\begin{align}
\label{Fodd7}
&F_{2k+1}\left(-\frac{1}{4p\tau}+\frac{1}{2};p\right)=\left(4i\tau\sqrt{p}\right)^{2k+1}\times\frac{\left({G}_{2k+1}(4\tau;p)-G_{2k+1}(2\tau;p)\right)}{(2^{2k+1}-1)(1+\delta_{k,0})}.
\end{align}

\end{Lemma}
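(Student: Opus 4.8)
The plan is to substitute $\tau\mapsto -\frac{1}{4p\tau}+\frac12$ into the definitions \eqref{Fe}, \eqref{Fo}, \eqref{Fo7} of $F_k(\tau;p)$ and then simplify in three stages: (a) strip off integer translations using the period‑$1$ property of the constituent $q$‑series; (b) turn the remaining half‑integer translation into a linear combination of dilates via Lemma~\ref{F1/2}; and (c) apply the Fricke involution $\tau\mapsto -1/(p\tau)$ termwise. I will do the even case \eqref{F2k} in detail and then indicate the identical mechanism for the two odd cases.

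For \eqref{F2k}, put $\sigma=-\frac{1}{4p\tau}$, so $2\sigma=-\frac{1}{2p\tau}$ and $4\sigma=-\frac{1}{p\tau}$. Substituting $\tau=\sigma+\tfrac12$ into \eqref{Fe}, the arguments $2(\sigma+\tfrac12)$ and $4(\sigma+\tfrac12)$ reduce mod $1$ to $2\sigma$ and $4\sigma$, while applying \eqref{E2kvan} to $E_{2k}(\sigma+\tfrac12)$ and to $E_{2k}(p\sigma+\tfrac p2)=E_{2k}(p\sigma+\tfrac12)$ (here $p$ is odd) gives
$$
G_{2k}\!\left(\sigma+\tfrac12;p\right)=-G_{2k}(\sigma;p)+(2^{2k}+2)G_{2k}(2\sigma;p)-2^{2k}G_{2k}(4\sigma;p).
$$
The three terms of the substituted \eqref{Fe} then combine so that the $G_{2k}(4\sigma;p)$ contributions cancel and the $G_{2k}(2\sigma;p)$ contributions add with coefficient $2^{2k}$, leaving numerator $-G_{2k}(-\tfrac{1}{4p\tau};p)+2^{2k}G_{2k}(-\tfrac{1}{2p\tau};p)$. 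The odd cases run the same way: for $p=7,23$ the two half‑shift formulas in Lemma~\ref{F1/2} for $E^0_{2k+1}$ and $E^\infty_{2k+1}$ coincide, so no $\tilde G$ survives; for $p=3,11$ they differ in sign on the dilate‑by‑$2$ term, which is exactly what brings in $\tilde G_{2k+1}$ — equivalently, this is the identity for $G_{2k+1}(\tau+\tfrac12;p)$ recorded just after \eqref{Fte}.

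It remains to evaluate $G_{2k},G_{2k+1},\tilde G_{2k+1}$ at $-\tfrac{1}{p\mu}$ for $\mu=\tau,2\tau,4\tau$, which needs Fricke transformation laws for the combinations $G,\tilde G$. From the classical $E_{2k}(-1/\nu)=\nu^{2k}E_{2k}(\nu)$ — and, when $k=1$, the fact that the quasimodular anomaly of $E_2$ cancels in $G_2(\tau;p)=E_2(\tau)-pE_2(p\tau)$ — one gets $G_{2k}(-\tfrac{1}{p\mu};p)=(-p)^k\mu^{2k}G_{2k}(\mu;p)$. From \eqref{Eptrans1}--\eqref{Eptrans2}, together with the elementary identity $p^{2k}E^0_{2k+1}(\mu;\chi_p)+(-p)^kE^\infty_{2k+1}(\mu;\chi_p)=(-p)^kG_{2k+1}(\mu;p)$ (and its sign‑changed analogue for $\tilde G_{2k+1}$), one gets
$$
G_{2k+1}\!\left(-\tfrac{1}{p\mu};p\right)=\frac{(-p)^k\sqrt p}{i}\,\mu^{2k+1}G_{2k+1}(\mu;p),\qquad \tilde G_{2k+1}\!\left(-\tfrac{1}{p\mu};p\right)=-\frac{(-p)^k\sqrt p}{i}\,\mu^{2k+1}\tilde G_{2k+1}(\mu;p).
$$
Substituting these with $\mu=\tau,2\tau,4\tau$ into the reduced numerators and using $2^{2k}=4^k$, $4^{2k+1}=2^{4k+2}$, $-2^{2k+1}\tfrac{(-p)^k\sqrt p}{i}\tau^{2k+1}=(2i\tau\sqrt p)^{2k+1}$ and $-2^{4k+2}\tfrac{(-p)^k\sqrt p}{i}\tau^{2k+1}=(4i\tau\sqrt p)^{2k+1}$, everything collapses onto the right‑hand sides of \eqref{F2k}, \eqref{Fodd3}, \eqref{Fodd7}, the denominators of \eqref{Fe}, \eqref{Fo}, \eqref{Fo7} being carried along unchanged.

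The main obstacle, and essentially the only place an error could creep in, is the final constant bookkeeping: matching the powers of $2$ produced by \eqref{E2kvan}--\eqref{E7half}, the power of $4$ from $(4\tau)^{2k+1}$, and the Fricke factors $(-p)^k$ resp. $\tfrac{(-p)^k\sqrt p}{i}$, and checking that the expected large cancellations leave precisely the short combinations and the scalar prefactors $\tau^{2k}(-16p)^k$, $(2i\tau\sqrt p)^{2k+1}$, $(4i\tau\sqrt p)^{2k+1}$ in the statement. Structurally, however, the proof is nothing more than periodicity, then Lemma~\ref{F1/2}, then the Fricke laws for $G$ and $\tilde G$ extracted from Lemma~\ref{lemma1}.
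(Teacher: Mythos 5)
Your proposal is correct and follows essentially the same route as the paper, whose proof of Lemma~\ref{Fhalf} is just a one-line appeal to the definitions of $F_k(\tau;p)$, Lemmas~\ref{lemma1} and~\ref{F1/2}, and the modularity of $E_{2k}$ on $\SL_2(\Z)$ — precisely the periodicity/half-shift/Fricke mechanism you carry out explicitly (including the correct observation that the $E_2$ anomaly cancels in $G_2(\tau;p)$). The constant bookkeeping you flag does check out against the stated right-hand sides, up to the paper's own inconsistency between the denominator $(2^{2k}+1)$ printed in \eqref{Fo} and the $(2^{2k+1}+1)$ appearing in \eqref{Fodd3} and confirmed by the $k=0$ examples.
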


\begin{proof}
These follow immediately from the definitions of $F_{k}(\tau;p)$ together with Lemmas \ref{lemma1} and \ref{F1/2} 
and the weight $2k$ modularity of $E_{2k}$ on $\SL_2(\Z)=\Gamma_0(1)$.

\end{proof}

Now we are ready for
\begin{proof}[Proof of Theorem $\ref{main}$]

Let $p=3$, $7$, $11$ or $23$, and let $k$ be a positive integer. Let $\ell$ be the smallest integer that satisfies 
$$
\ell\geq\begin{cases}\frac{(p+1)k}{8}-\frac{1}{2},&\mbox{if $p=3$ or $11$ and $k$ is odd,}\\
\frac{(p+1)k}{8}-1,&\mbox{otherwise.}\end{cases}
$$
Consider the functions
$$
f(\tau)=f_{k,p}(\tau)=\frac{F_{k}(\tau;p)}{z_{p}(\tau)^{k}x_{p}(\tau)^{\ell}}\quad\mbox{and}\quad
g(\tau)=g_{p}(\tau)=\frac{1}{x_{p}(\tau)}.
$$
Clearly, both $f(\tau)$ and $g(\tau)$ are analytic on $\mathbb{H}$. {And we may verify that both $f(\tau)$ and $g(\tau)$ are invariant under $\Gamma_{0}(4p)$ and 
$$
W_{e}=\left\{\begin{pmatrix}ae&b\\4pc&de\end{pmatrix}:a,\,b,\,c,\,d\in\mathbb{Z},\,\mbox{the determinant is $e$}\right\}
$$
for $e\in\{4,\,p,\,4p\}$. Therefore, both $f(\tau)$ and $g(\tau)$ are invariant under $
{{
\Gamma_{0}(4p)^+
}}
$, the group obtained from $\Gamma_{0}(4p)$ by adjoining all of its Atkin-Lehner involutions $W_{e}$.}  Let us analyze the behavior at $\tau=i\infty$. By observing the $q$-expansions, we find that 
$$
f(\tau)=\frac{1+O(q)}{(1+O(q))^{k}q^{\ell}(1+O(q))^{\ell}}=q^{-\ell}+O(q^{-\ell+1}).
$$
Therefore $f(\tau)$ has a pole of order $\ell$ at $i\infty$. Similarly, we note that $g(\tau)$ has a simple pole at $\tau=i\infty$. It implies that there exist 
constants
 $a_{1},\,\ldots,a_{\ell}\in \mathbb{C}$ such that the function
$$
h(\tau):=f(\tau)-\sum_{j=1}^{\ell}a_{j}g(\tau)^{j}
$$
has no pole at $\tau=i\infty$, that is,
$$
h(\tau)=a_{0}+O(q)\quad\mbox{as $\tau\to i\infty$}
$$
for some constant $a_{0}$. Let us consider the behavior of $h(\tau)$ at $\tau=\frac{1}{2}$. By Lemma \ref{Fhalf} and the transformation formula for Dedekind's eta function, we find that
$$
f\left(-\frac{1}{4p\tau}+\frac{1}{2}\right)=\begin{cases}C_{0}q^{2\ell-\frac{(p+1)k}{4}+1}(1+O(q)),&\mbox{if $p=3$ or $11$ and $k$ is odd,}\\ C_{1}q^{2\ell-\frac{(p+1)k}{4}+2}(1+O(q)),&\mbox{otherwise,}\end{cases}
$$
and
$$
g\left(-\frac{1}{4p\tau}+\frac{1}{2}\right)=C_{3}q^{2}(1+O(q))
$$
for some constants $C_{0}$, $C_{1}$ and $C_{2}$ as $\tau\to i\infty$. Therefore, $h(\tau)\to a_{0}$ as $\tau\to\frac{1}{2}$. 
{{Since the only cusps of $
{{
\Gamma_{0}(4p)^+
}}
$ are at $i\infty$ and $\frac{1}{2}$, it follows that $h(\tau)$}} is
 holomorphic on $X(
{{
\Gamma_{0}(4p)^+
}}
)$, and thus $h(\tau)$ is a constant, that is, $h(\tau)\equiv a_{0}$. Therefore, we have
$$
f(\tau)=\sum_{j=0}^{\ell}a_{j}g(\tau)^{j},
$$
which is equivalent to
$$
F_{k}(\tau;p)=z_{p}^{k}\sum_{j=0}^{\ell}a_{j}x_{p}^{\ell-j}=z_{p}^{k}\sum_{j=0}^{\ell}b_{j}x_{p}^{j},
$$
where $b_{j}:=a_{\ell-j}$. By the choice of $\ell$ and comparing the constant terms on both sides, we conclude that
{{$b_0=1$ and}}
$$
F_{k}(\tau;p)=z_{p}^{k}+z_{p}^{k}\sum_{1\leq j<\frac{(p+1)k}{8}}b_{j}x_{p}^{j}.
$$
{{Now take $b_j=-c_{p,k,j}$ to complete the proof.}}
\end{proof}

\begin{proof}[Proof of Corollary $\ref{main2}$]
It follows directly from Theorem $\ref{main}$ together with the following observations:
\begin{align}
\tilde{z}_{p}(\tau)&=\frac{i}{4\sqrt{p}\tau}z_{p}\left(-\frac{1}{2p\tau}+\frac{1}{2}\right), \nonumber \\
\tilde{x}_{p}(\tau)&=-2^{-24/(p+1)}\tilde{x}_{p}\left(-\frac{1}{2p\tau}+\frac{1}{2}\right),\nonumber \\
\tilde{F}_{k}(\tau;p)&=\left(\frac{i}{4\sqrt{p}\tau}\right)^{k}F_{k}\left(-\frac{1}{2p\tau}+\frac{1}{2}\right). \label{last}
\end{align}
\end{proof}


\begin{thebibliography}{99}

\bibitem{alaca4}
{A. Alaca, 
{\em On the number of representations of a positive integer by certain quadratic forms in twelve variables,} J. Combin. Number Theory, {\bf3} (2011), 167--177.}

\bibitem{alaca1}
{A. Alaca, S. Alaca, M. F. Lemire and K. S. Williams, 
{\em Nineteen quaternary quadratic forms,} Acta Arith. {\bf130} (2007), 277--310.}

\bibitem{alaca2}
{A. Alaca, S. Alaca and K. S. Williams, 
{\em Some new theta function identities with applications to sextenary quadratic forms,} J. Combin. Number Theory, {\bf1} (2009), 89--98.}

\bibitem{alaca3}
{S. Alaca and K. S. Williams, 
{\em The number of representations of a positive integer by certain octonary quadratic forms,} Functiones et Approximatio, {\bf43} (2010), 45--54.}

\bibitem{bachmann}
{P. Bachmann, 
{\em Niedere Zahlentheorie,} Chelsea, New York, 1968.}

\bibitem{beridze}
{R. I. Beridze, 
{\em The representation of numbers by certain quadratic forms in eight variables,} Thbilis. Univ. $\breve{\mbox{S}}$rom. A, {\bf1} (1971), 5--16.}

\bibitem{berkovich}
{A. Berkovich and H. Yesilyurt, 
{\em On the representations of integers by the sextenary quadratic form $x^{2}+y^{2}+z^{2}+7(s^{2}+t^{2}+u^{2})$ and $7$-cores,} J. Number Theory, {\bf129} (2009), 1366--1378.}

\bibitem{berndt}
{B. C. Berndt, 
{\em Ramanujan's Notebooks, Part III,} Springer-Verlag, New York, 1991.}

\bibitem{chancooper}
H. H. Chan and S. Cooper, 
{\em Powers of theta functions,} Pacific J. Math., {\bf235} (2008), 1--14.

\bibitem{sc}{S. Cooper, \emph{On sums of an even number of squares, and an even number of triangular
numbers: an elementary approach based on Ramanujan's
${}_1\psi_1$ summation formula.}
In: {\em $q$-series with applications to combinatorics, number theory, and physics}
(Urbana, IL, 2000), 115--137, Contemp. Math., {\bf 291}, Amer. Math. Soc., Providence, RI, 2001.}


\bibitem{cooper}{S. Cooper, \emph{Construction of Eisenstein series for $\Gamma_{0}(p)$}, Int. J. Number Theory {\bf5} (2009), 765--778.}

\bibitem{level14}
S. Cooper and D. Ye,
{\em Level 14 and 15 analogues of Ramanujan's elliptic functions to alternative bases,} Trans. Amer. Math. Soc., DOI: http://dx.doi.org/10.1090/tran6658

\bibitem{fine}
N. Fine,
{\em Basic Hypergeometric Series and Applications,}
AMS, Providence, RI, 1988.

\bibitem{hirschhorn}
{M. D. Hirschhorn, 
{\em Three classical results on representations of a number,} S$\acute{\mbox{e}}$m. Lothar. Combin. {\bf42} (1999), art. B42f, 8 pp.}

\bibitem{kloo}
{H. D. Kloosterman, 
{\em On the representation of numbers in the form $ax^{2}+by^{2}+cz^{2}+dt^{2}$,} Proc. London Math. Soc. {\bf25} (1926), 143--173.}


\bibitem{kolberg}{O. Kolberg, \emph{Note on the Eisenstein series of $\Gamma_{0}(p)$}, Arbok Univ. Bergen Mat.-Natur. Ser. {\bf1968} (1968), 20 pp. (1969).}

\bibitem{liouville1}
{J. Liouville, 
{\em Sur la forme $x^{2}+y^{2}+3(z^{2}+t^{2})$,} J. Math. Pures Appl. {\bf5} (1860), 147--152.}

\bibitem{liouville2}
{J. Liouville, 
{Remarque nouvelle sur la forme $x^{2}+y^{2}+3(z^{2}+t^{2})$,} ibid. {\bf6} (1861), 296.}

\bibitem{lorenz}
{L. Lorenz, 
Oeuvres Scientifiques, Revues et annot$\acute{\mbox{e}}$es par H. Valentiner, Tome second, la Fondation Carlsberg, Librarie Lehmann $\&$ Stage, Copenhague, 1904.}


\bibitem{mordell}{L. J. Mordell, \emph{On the representation of numbers as the sum of $2$r squares,} Quart. J. Pure and Appl. Math., Oxford {\bf48} (1917), 93--104.}

\bibitem{pall}
{G. Pall, 
{\em On the application of a theta formula to representation in binary quadratic forms,} Bull. Amer. Math. Soc., {\bf37} (1931), 863--869.}

\bibitem{pepin}
{T. Pepin, 
{\em Sur quelques formes quadratiques quaternaires,} J. Math. Pures Appl. {\bf6} (1890), 5--67.}

\bibitem{ramanujan1}
{S. Ramanujan, 
{\em Notebooks (2 volumes)}, Tata Institute of Fundamental Research, Bombay, 1957.}

\bibitem{ram}{S. Ramanujan, \emph{Collected Papers,} AMS Chelsea Publishing, Providence, Rhode Island, 2000.}

\bibitem{dye}
{D. Ye, {\em Representations of certain binary quadratic forms as a sum of Lambert series and eta-quotients},
Int. J. Number Theory {\bf11} (2015), 1073-1088}

\end{thebibliography}
\end{document}